\newtheorem*{rep@theorem}{\rep@title}
\newcommand{\newreptheorem}[2]{%
\newenvironment{rep#1}[1]{%
 \def\rep@title{#2 \ref{##1}}%
 \begin{rep@theorem}}%
 {\end{rep@theorem}}}
\newtheorem{theorem}{Theorem}[section]
\newtheorem{lemma}{Lemma}[section]
\newtheorem{corollary}{Corollary}[section]
\theoremstyle{definition}
\newtheorem{definition}{Definition}[section]
\newtheorem{remark}{Remark}[section]
\DeclareMathOperator{\des}{des}
\DeclareMathOperator{\peak}{peak}
\DeclareMathOperator{\tl}{tl}
\DeclareMathOperator{\tls}{tls}
\DeclareMathOperator{\zeil}{zeil}
\DeclareMathOperator{\rmax}{rmax}
\newcommand{\Comp}{\operatorname{Comp}}
\newcommand{\Av}{\operatorname{Av}}
\DeclareMathOperator{\SW}{\mathsf{SW}}
\DeclareMathOperator{\Bubb}{{\bf B}}
\author{Colin Defant\affiliationmark{1,2}\thanks{The author was supported by a Fannie and John Hertz Foundation Fellowship and an NSF Graduate Research Fellowship.}}
\title[Enumeration of Stack-Sorting Preimages]{Enumeration of Stack-Sorting Preimages via a Decomposition Lemma}
\affiliation{
  Princeton University}
\keywords{Permutation pattern; stack-sorting; bubble sort; Pop-stack-sorting; unbalanced Wilf equivalence.}
\begin{document}
\publicationdetails{22}{2021}{2}{3}{6709}
\maketitle

\begin{abstract}
We give three applications of a recently-proven ``Decomposition Lemma,'' which allows one to count preimages of certain sets of permutations under West's stack-sorting map $s$. We first enumerate the permutation class \[s^{-1}(\Av(231,321))=\Av(2341,3241,45231),\] finding a new example of an unbalanced Wilf equivalence. This result is equivalent to the enumeration of permutations sortable by $\Bubb\circ s$, where $\Bubb$ is the bubble sort map. We then prove that the sets $s^{-1}(\Av(231,312))$, $s^{-1}(\Av(132,231))=\Av(2341,1342,\underline{32}41,\underline{31}42)$, and $s^{-1}(\Av(132,312))=\Av(1342,3142,3412,34\underline{21})$ are counted by the so-called ``Boolean-Catalan numbers," settling a conjecture of the current author and another conjecture of Hossain. This completes the enumerations of all sets of the form $s^{-1}(\Av(\tau^{(1)},\ldots,\tau^{(r)}))$ for $\{\tau^{(1)},\ldots,\tau^{(r)}\}$ $\subseteq S_3$ with the exception of the set $\{321\}$. We also find an explicit formula for $|s^{-1}(\Av_{n,k}(231,312,321))|$, where $\Av_{n,k}(231,312,321)$ is the set of permutations in $\Av_n(231,312,321)$ with $k$ descents. This allows us to prove a conjectured identity involving Catalan numbers and order ideals in Young's lattice. 
\end{abstract}

\section{Introduction}\label{Sec:Intro}

\subsection{Background}
We use the word ``permutation" to refer to an ordering of a set of positive integers written in one-line notation. Let $S_n$ denote the set of permutations of the set $[n]$. If $\pi$ is a permutation of length $n$, then the \emph{standardization} of $\pi$ is the permutation in $S_n$ obtained by replacing the $i^\text{th}$-smallest entry in $\pi$ with $i$ for all $i$. We say a permutation is \emph{standardized} if it is equal to its standardization.

\begin{definition}\label{Def2}
Given $\tau\in S_m$, we say a permutation $\sigma=\sigma_1\cdots\sigma_n$ \emph{contains the pattern} $\tau$ if there exist indices $i_1<\cdots<i_m$ in $[n]$ such that the standardization of $\sigma_{i_1}\cdots\sigma_{i_m}$ is $\tau$. We say $\sigma$ \emph{avoids} $\tau$ if it does not contain $\tau$. Let $\Av(\tau^{(1)},\tau^{(2)},\ldots)$ denote the set of standardized permutations that avoid the patterns $\tau^{(1)},\tau^{(2)},\ldots$ (this list of patterns could be finite or infinite). A set of the form $\Av(\tau^{(1)},\tau^{(2)},\ldots)$ is called a \emph{permutation class}. Let $\Av_n(\tau^{(1)},\tau^{(2)},\ldots)=\Av(\tau^{(1)},\tau^{(2)},\ldots)\cap S_n$.  
\end{definition}

The investigation of permutation patterns began with Knuth's analysis of a certain ``stack-sorting algorithm" \cite{Knuth}. This analysis also marked the first use of what is now called the ``kernel method," a powerful enumerative tool that we employ throughout this article (see \cite{Bousquet4,Bousquet3,Banderier,
Prodinger} for more information about this technique). In his dissertation, West \cite{West} defined a deterministic variant of Knuth's algorithm. This variant is a function, which we call the ``stack-sorting map" and denote by $s$, that sends permutations to permutations. The map $s$ sends the empty permutation to itself. To define $s(\pi)$ when $\pi$ is a nonempty permutation, we write $\pi=LnR$, where $n$ is the largest entry in $\pi$. We then let $s(\pi)=s(L)s(R)n$. For example, \[s(14253)=s(142)\,s(3)\,5=s(1)\,s(2)\,4\,3\,5=12435.\] There is now a large amount of literature concerning the stack-sorting map \cite{Bona, BonaWords, BonaSurvey, BonaSimplicial, BonaSymmetry, Bousquet98, Bousquet, Bouvel, BrandenActions, Branden3, Claesson, Cori, DefantCatalan, 
DefantCounting,
DefantFertilityWilf, DefantPostorder, DefantPreimages, DefantClass, DefantMonotonicity, DefantTroupes, DefantEngenMiller, Dulucq, Dulucq2, Fang, Goulden, Hanna, Singhal, Ulfarsson, West, Zeilberger}. 

We say a permutation $\pi$ is $t$-\emph{stack-sortable} if $s^t(\pi)$ is an increasing permutation, where $s^t$ denotes the $t$-fold iterate of $s$. Let $\mathcal W_t(n)$ be the set of $t$-stack-sortable permutations in $S_n$, and let $W_t(n)=|\mathcal W_t(n)|$. Knuth initiated the study of pattern avoidance and the study of stack-sorting (and introduced the kernel method) with the following theorem. 

\begin{theorem}[\!\!\cite{Knuth}]\label{Thm1}
A permutation is $1$-stack-sortable if and only if it avoids the pattern $231$. Furthermore, $W_1(n)=|\Av_n(231)|=C_n$, where $C_n=\frac{1}{n+1}{2n\choose n}$ is the $n^\text{th}$ Catalan number. 
\end{theorem}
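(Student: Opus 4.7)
The plan is to prove the pattern characterization by induction on $n = |\pi|$, exploiting the recursive definition $s(\pi) = s(L)s(R)n$ whenever $\pi = LnR$, and then deduce the enumeration from the resulting Catalan recurrence. The starting point is a structural observation: if $\pi = LnR$ lies in $\Av(231)$, then every entry of $L$ is smaller than every entry of $R$, and both $L$ and $R$ lie in $\Av(231)$. For the first clause, an entry $a \in L$ and an entry $b \in R$ with $a > b$ would give a subsequence $a, n, b$ normalizing to $231$; the second clause is immediate since subsequences of $231$-avoiders avoid $231$.

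With this observation, I would prove by induction on $n$ that $\pi \in \Av(231)$ if and only if $s(\pi)$ is increasing. For the forward direction, the observation plus the inductive hypothesis make $s(L)$ and $s(R)$ increasing, and because every entry of $L$ is smaller than every entry of $R$ (and both are smaller than $n$), the concatenation $s(L)s(R)n$ is increasing. For the reverse direction I would argue the contrapositive: suppose $\pi_{i_1}\pi_{i_2}\pi_{i_3}$ forms a $231$ pattern. If all three indices lie in $L$ or all three lie in $R$, the inductive hypothesis produces a descent in $s(L)$ or $s(R)$, which survives in $s(\pi)$. Otherwise the pattern straddles the position of $n$, and a short case check on where $i_1, i_2, i_3$ fall relative to that position exhibits some $a \in L$ and $b \in R$ with $a > b$; then $a$ occurs in $s(L)$ before $b$ occurs in $s(R)$, producing a descent in $s(\pi)$.

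For the enumeration, the observation identifies $\Av_n(231)$ with the disjoint union $\bigsqcup_{k=0}^{n-1} \Av_k(231) \times \Av_{n-1-k}(231)$: writing $\pi = LnR$ with $|L| = k$, the size-ordering forces $L$ to use the values $\{1, \ldots, k\}$ and $R$ to use $\{k+1, \ldots, n-1\}$, and any pair of $231$-avoiding arrangements on the two sides reassembles into a $231$-avoider. This yields the recurrence $|\Av_n(231)| = \sum_{k=0}^{n-1} |\Av_k(231)|\,|\Av_{n-1-k}(231)|$ with $|\Av_0(231)| = 1$, which is the defining recurrence for the Catalan numbers. The only delicate point I anticipate is the straddling case of the reverse direction, where one must check that in every allowed placement of $i_1, i_2, i_3$ relative to the position of $n$ (including the subcase $\pi_{i_2} = n$) the pattern genuinely yields an element of $L$ exceeding an element of $R$; this is short bookkeeping rather than a serious obstacle.
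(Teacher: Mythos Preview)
Your argument is correct. The paper does not actually supply a proof of this theorem; it is stated as Knuth's classical result and invoked as background throughout, so there is no in-paper proof to compare against. What you have written is the standard inductive argument, and the details are sound: in the straddling case, since $\pi_{i_2}$ is the largest of the three values, the only way an index can coincide with the position of $n$ is $i_2$, and in every straddling configuration one has $i_1$ to the left of $n$ and $i_3$ to the right, so $\pi_{i_1}\in L$, $\pi_{i_3}\in R$, and $\pi_{i_1}>\pi_{i_3}$ gives an inversion in $s(L)s(R)n$. The Catalan recurrence via the decomposition $\pi=LnR$ is likewise the usual derivation.
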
 

In his dissertation, West conjectured that $W_2(n)=\frac{2}{(n+1)(2n+1)}{3n\choose n}$. Zeilberger \cite{Zeilberger} later proved this formula, and other proofs have emerged over the past few decades \cite{Cori,DefantCounting,Dulucq,Dulucq2,
Fang,Goulden}. Some authors have investigated the enumeration of $2$-stack-sortable permutations according to various statistics \cite{BonaSimplicial,Bousquet98,Bouvel,
DefantCounting,Dulucq}. There is little known about $t$-stack-sortable permutations when $t\geq 3$. The best known upper bound in general is given by the estimate $W_t(n)\leq (t+1)^{2n}$. The current author \cite{DefantPreimages} proved that $\lim\limits_{n\to\infty}W_3(n)^{1/n}<12.53296$ and $\lim\limits_{n\to\infty}W_4(n)^{1/n}<21.97225$. Recently, B\'ona \cite{BonaWords} reproved the first of these two estimates. 

Even more recently, the current author \cite{DefantCounting} proved a certain ``Decomposition Lemma" and used it to give a new proof of Zeilberger's formula for $W_2(n)$ that generalizes in order to count $2$-stack-sortable permutations according to their number of descents and number of peaks. The proof also generalizes to the setting of $3$-stack-sortable permutations, yielding a recurrence that generates the numbers $W_3(n)$ (more generally, it gives a recurrence for $W_3(n,k,p)$, the number of $3$-stack-sortable permutations of length $n$ with $k$ descents and $p$ peaks). This allowed the author to shed light on several conjectures of B\'ona concerning $3$-stack-sortable permutations, disproving one of them. 

West \cite{West} defined the \emph{fertility} of a permutation $\pi$ to be $|s^{-1}(\pi)|$, the number of preimages of $\pi$ under $s$. He then went through a great deal of effort to compute the fertilities of the permutations of the forms \[23\cdots k1(k+1)\cdots n,\quad 12\cdots(k-2)k(k-1)(k+1)\cdots n,\quad\text{and}\quad k12\cdots(k-1)(k+1)\cdots n.\] The fact that these permutations are of such specific forms indicates the difficulty of computing fertilities. Bousquet-M\'elou \cite{Bousquet} found a method for determining whether or not a given permutation is \emph{sorted}, meaning that its fertility is positive. She asked for a general method for computing the fertility of any given permutation. The current author achieved this in even greater generality in \cite{DefantPostorder, DefantPreimages, DefantClass} using new combinatorial objects called ``valid hook configurations." He and others have applied this method in order to prove several results about fertilities of permutations, many of which link the stack-sorting map to other fascinating combinatorial objects and sequences \cite{DefantCatalan, 
DefantCounting, 
DefantFertilityWilf, DefantMonotonicity, DefantTroupes, Hanna, 
DefantPostorder, DefantPreimages, DefantClass, DefantEngenMiller, Singhal}. 

Define the fertility of a set $A$ of permutations to be $|s^{-1}(A)|$. In \cite{DefantClass}, the current author computed the fertilities of many sets of the form $\Av_n(\tau^{(1)},\ldots,\tau^{(r)})$ for $\tau^{(1)},\ldots,\tau^{(r)}\in S_3$. He also refined these enumerative results according to the statistics that count descents and peaks. Investigations of preimages of permutation classes appeared earlier in \cite{Bouvel} and \cite{Claesson} and appeared more recently in \cite{DefantFertilityWilf}; this line of work is motivated by the observations that $\mathcal W_1(n)=s^{-1}(\Av_n(21))$ and $\mathcal W_2(n)=s^{-1}(\Av_n(231))$. Further motivation comes from the fact that preimages of permutation classes under $s$ are often themselves permutation classes. For example, $s^{-1}(\Av(321))=\Av(34251, 35241, 45231)$. Our goal in the present article is to compute fertilities of some sets of permutations that were not completed in \cite{DefantClass}. We do this via the Decomposition Lemma, which, when combined with generating function tools such as the kernel method, gives a unified technique for computing fertilities.  

\subsection{Summary of Main Results}
Section \ref{Sec:DecompLemma} is brief; its purpose is simply to state the Decomposition Lemma. In Section \ref{Sec:Unbalanced}, we use the Decomposition Lemma to prove that \[\sum_{n\geq 0}|s^{-1}(\Av_n(231,321))|x^n=\frac{1}{1-xC(xC(x))},\] where $C(x)=\dfrac{1-\sqrt{1-4x}}{2x}$ is the generating function of the Catalan numbers. This proves Conjecture~12.3 from \cite{DefantClass}. This is notable because \[s^{-1}(\Av(231,321))=\Av(2341,3241,45231).\] Callan \cite{Callan} proved that $\dfrac{1}{1-xC(xC(x))}$ is the generating function of the permutation class \linebreak $\Av(4321,4213)$, and Bloom and Vatter \cite{Bloom} later reproved this result with a clearer bijection. The authors of \cite{Mansour} showed that this expression is also the generating function for $\Av(2413,2431,23154)$, which means that $\Av(4321,4213)$ and $\Av(2413,2431,23154)$ form a pair of ``unbalanced" Wilf equivalent permutation classes. The first naturally-occurring unbalanced Wilf equivalent pairs of finitely-based\footnote{We say a permutation class is \emph{finitely-based} if it is of the form $\Av(\tau^{(1)},\ldots,\tau^{(r)})$, where the list $\tau^{(1)},\ldots,\tau^{(r)}$ is finite.} permutation classes were found only recently \cite{Bloom2, Burstein}. Our theorem proves that $\Av(2341,3241,45231)$ is also Wilf equivalent to $\Av(4321,4213)$ and $\Av(2413,2431,23154)$, providing yet another example of a pair of unbalanced Wilf equivalent finitely-based classes. 

Another motivation for the results of Section \ref{Sec:Unbalanced} comes from the study of the \emph{bubble sort map} $\Bubb$. This map sends the empty permutation to itself. If $\pi=LnR$, where $n$ is the largest entry of $\pi$, then $\Bubb(\pi)=\Bubb(L)Rn$. The article \cite{Albert} investigates permutations sortable via $s\circ\Bubb$, showing that \[(s\circ\Bubb)^{-1}(123\cdots n)=\Av_n(3241, 2341, 4231, 2431).\] Therefore, it is natural to ask about permutations that are sortable by $\Bubb\circ s$. It is known (see \cite{Albert}) that $\Bubb^{-1}(123\cdots n)=\Av_n(231,321)$, so it follows from the main result of Section \ref{Sec:Unbalanced} that \[(\Bubb\circ s)^{-1}(123\cdots n)=\Av_n(2341,3241,45231)\] and that \[\sum_{n\geq 0}|(\Bubb\circ s)^{-1}(123\cdots n)|x^n=\frac{1}{1-xC(xC(x))}.\]   

In Section \ref{Sec:Boolean}, we consider the chain of equalities \[\sum_{n\geq 1}|s^{-1}(\Av_n(132,312))|x^n=\sum_{n\geq 1}|s^{-1}(\Av_n(231,312))|x^n=\sum_{n\geq 1}|s^{-1}(\Av_n(132,231))|x^n\] \[=\frac{1-2x-\sqrt{1-4x-4x^2}}{4x}.\] The first of these equalities was proven in \cite{DefantClass}, while the second was proven in \cite{DefantFertilityWilf}. We use the Decomposition Lemma to prove the third equality, completing the chain and proving a conjecture from \cite{DefantClass}. Let us also remark that $s^{-1}(\Av(231,312))$ is the set of permutations that are sortable via one iteration of the map $\mathsf{Pop}\circ s$, where $\mathsf{Pop}$ is the pop-stack-sorting map (see \cite{Asinowski} and the references therein). Therefore, this theorem answers a deterministic variant of one of the problems considered in \cite{Smith}. 

The expression $\displaystyle \frac{1-2x-\sqrt{1-4x-4x^2}}{4x}$ is the generating function of OEIS sequence A071356 \cite{OEIS}. Hossain \cite{Hossain} has named these numbers ``Boolean-Catalan numbers" and has discussed several interesting combinatorial properties that they enjoy. The above chain of inequalities is important because one can show that \[s^{-1}(\Av(132,231))=\Av(2341,1342,\underline{32}41,\underline{31}42)\] and \[ s^{-1}(\Av(132,312))=\Av(1342,3142,3412,34\underline{21}),\] where the underlines are used to represent so-called ``vincular patterns" (defined in Section \ref{Sec:Boolean}). Therefore, we are able to use the stack-sorting map as a tool in order to enumerate sets of permutations avoiding certain vincular patterns. This is particularly noteworthy in the case of the set $s^{-1}(\Av(132,231))=\Av(2341,1342,\underline{32}41,\underline{31}42)$. Indeed, Hossain \cite{HossainPrivate} \emph{conjectured} that this set of permutations is enumerated by the Boolean-Catalan numbers. As far as we are aware, there is no known proof of this fact that does not make use of the stack-sorting map. 

Applying the Decomposition Lemma yet again in Section \ref{Sec:Descents}, we prove that \[|s^{-1}(\Av_{n,k}(231,312,321))|=\frac{1}{n+1}{n-k-1\choose k}{2n-2k\choose n},\] where $\Av_{n,k}(231,312,321)$ is the set of permutations in $\Av_n(231,312,321)$ with $k$ descents. This yields a new proof of Theorem 6.2 from \cite{DefantClass}. It also allows us to prove Conjecture 6.1 from that same article, which is an identity involving Catalan numbers and orderings on integer compositions and partitions. 

There are $64$ subsets $\{\tau^{(1)},\ldots,\tau^{(r)}\}$ of $S_3$; for each one, we have the problem of enumerating the permutations in $s^{-1}(\Av(\tau^{(1)},\ldots,\tau^{(r)}))$. Many of these enumerations were accomplished in \cite{Bouvel,DefantClass,Knuth,Zeilberger}. The main results of Sections \ref{Sec:Unbalanced} and \ref{Sec:Boolean} below finish the last remaining cases with the single exception of the set $\{321\}$. 

\begin{remark}
The permutation classes $\Av(231,321)$, $\Av(132,231)$, and $\Av(231,312,321)$ have very simple descriptions, so one might think that the enumerations of their preimages under $s$ could be computed directly and easily without additional tools. This is not the case. As mentioned above, even computing the fertilities of extremely simple permutations like $23\cdots k1(k+1)\cdots n$ takes some work without the Decomposition Lemma or valid hook configurations. Even with the machinery of valid hook configurations, the author could not figure out how to compute these fertilities in the paper \cite{DefantClass}. 
\end{remark}

\subsection{General Proof Strategy}
The \emph{tail length} of a permutation $\pi=\pi_1\cdots\pi_n\in S_n$, denoted $\tl(\pi)$, is the largest integer $\ell\in\{0,\ldots,n\}$ such that $\pi_i=i$ for all $i\in \{n-\ell+1,\ldots,n\}$. Our strategy for proving the results in Sections \ref{Sec:Unbalanced}--\ref{Sec:Descents} is to use the Decomposition Lemma in order to count various sets of permutations according to the statistic $\tls$ defined by $\tls(\pi)=\tl(s(\pi))$. One interesting property of this statistic is that it is closely related to the statistic $\zeil$ that Zeilberger used when he counted $2$-stack-sortable permutations. For $\sigma\in S_n$, $\zeil(\sigma)$ is the largest positive integer $m$ such that $n,n-1,\ldots,n-m+1$ appear in decreasing order in $\sigma$. It is proven in \cite{DefantFertilityWilf} that \[\zeil(\sigma)=\min\{\rmax(\sigma),\tls(\sigma)\},\] where $\rmax(\sigma)$ is the number of right-to-left maxima of $\sigma$. Our enumerations according to the statistic $\tls$ will yield generating function equations involving catalytic variables that we can remove using the kernel method.

\section{The Decomposition Lemma}\label{Sec:DecompLemma}
A \emph{descent} of a permutation $\pi=\pi_1\cdots\pi_n$ is an index $i\in[n-1]$ such that $\pi_i>\pi_{i+1}$. The \emph{plot} of $\pi$ is the diagram showing the points $(i,\pi_i)$ for all $i\in[n]$. For example, the image on the left in Figure \ref{Fig2} is the plot of $31542678$. A \emph{hook} of a permutation is obtained by starting at a point $(i,\pi_i)$ in the plot of $\pi$, drawing a vertical line segment moving upward, and then drawing a horizontal line segment to the right that connects with a point $(j,\pi_j)$. This only makes sense if $i<j$ and $\pi_i<\pi_j$. The point $(i,\pi_i)$ is called the \emph{southwest endpoint} of the hook, while $(j,\pi_j)$ is called the \emph{northeast endpoint}. Let $\SW_i(\pi)$ be the set of hooks of $\pi$ with southwest endpoint $(i,\pi_i)$. The right image in Figure \ref{Fig2} shows a hook of $31542678$. This hook is in $\SW_3(31542678)$ because its southwest endpoint is $(3,5)$.   

\begin{figure}[h]
  \begin{center}{\includegraphics[width=0.7\textwidth]{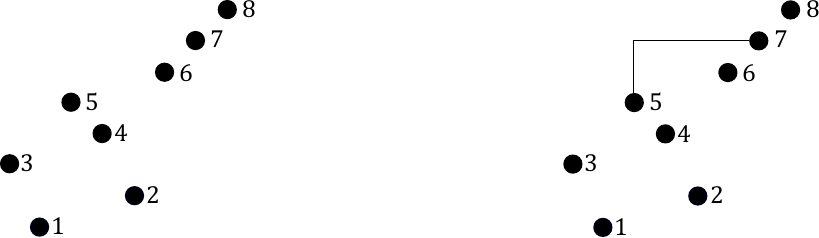}}
  \end{center}
  \caption{The left image is the plot of $31542678$. The right image shows this plot along with a single hook.}\label{Fig2}
\end{figure}

As mentioned above, the tail length $\tl(\pi)$ of a permutation $\pi=\pi_1\cdots\pi_n\in S_n$ is the largest integer $\ell\in\{0,\ldots,n\}$ such that $\pi_i=i$ for all $i\in \{n-\ell+1,\ldots,n\}$. The \emph{tail} of $\pi$ is the sequence of points $(n-\tl(\pi)+1,n-\tl(\pi)+1),\ldots,(n,n)$ in the plot of $\pi$. For example, the tail length of the permutation $31542678$ shown in Figure \ref{Fig2} is $3$, and the tail of this permutation is $(6,6),(7,7),(8,8)$. We say a descent $d$ of $\pi$ is \emph{tail-bound} if every hook in $\SW_d(\pi)$ has its northeast endpoint in the tail of $\pi$. The tail-bound descents of $31542678$ are $3$ and $4$.  

Suppose $H$ is a hook of a permutation $\pi=\pi_1\cdots\pi_n$ with southwest endpoint $(i,\pi_i)$ and northeast endpoint $(j,\pi_j)$. Let $\pi_U^H=\pi_1\cdots\pi_i\pi_{j+1}\cdots\pi_n$ and $\pi_S^H=\pi_{i+1}\cdots\pi_{j-1}$. The permutations $\pi_U^H$ and $\pi_S^H$ are called the \emph{$H$-unsheltered subpermutation of $\pi$} and the \emph{$H$-sheltered subpermutation of $\pi$}, respectively. For example, if $\pi=31542678$ and $H$ is the hook shown on the right in Figure \ref{Fig2}, then $\pi_U^H=3158$ and $\pi_S^H=426$. Note that the northeast endpoint of the hook does not contribute to either the unsheltered subpermutation nor the sheltered subpermutation. In all of the cases we consider in this paper, the plot of $\pi_S^H$ lies completely below the hook $H$ in the plot of $\pi$ (it is ``sheltered" by the hook $H$). 

\begin{lemma}[Decomposition Lemma, \cite{DefantCounting}]\label{Lem1}
If $d$ is a tail-bound descent of a permutation $\pi\in S_n$, then \[|s^{-1}(\pi)|=\sum_{H\in\SW_d(\pi)}|s^{-1}(\pi_U^H)|\cdot|s^{-1}(\pi_S^H)|.\]
\end{lemma}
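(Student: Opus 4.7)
I will establish the identity by constructing a bijection
\[ s^{-1}(\pi) \;\longleftrightarrow\; \bigsqcup_{H \in \SW_d(\pi)} s^{-1}(\pi_U^H) \times s^{-1}(\pi_S^H). \]
Starting from $\mu \in s^{-1}(\pi)$, I trace the stack-sorting algorithm on $\mu$ and focus on the moment $\pi_d$ is popped from the stack. At that instant I identify a distinguished element $m$ of $\mu$: either $m$ is the element sitting immediately beneath $\pi_d$ in the stack (if the stack is nonempty after the pop), or $m$ is the input currently being processed that triggered the pop and is subsequently pushed onto the (now empty) stack (if the stack is empty after the pop). In either case $m>\pi_d$, and $m$ is eventually output at some later position $j>d$ of $\pi$, producing a valid hook in $\SW_d(\pi)$ from $(d,\pi_d)$ to $(j,\pi_j)=(j,m)$. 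The tail-bound hypothesis then forces $\pi_j=j=m$, so this procedure assigns a canonical hook $H_\mu\in\SW_d(\pi)$ to $\mu$.

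Next I decompose $\mu$. The inputs of $\mu$ consumed by the algorithm between the pop of $\pi_d$ and the pop of $m$, excluding $m$ itself, form a contiguous factor $\mu_2$ of $\mu$ whose underlying value set is exactly $\{\pi_{d+1},\ldots,\pi_{j-1}\}$---the values of $\pi_S^H$. Because $m$ remains undisturbed at the bottom of the ``sub-stack'' that grows and shrinks above it during this interval, the computation on $\mu_2$ is indistinguishable from an isolated stack-sort, so $s(\mu_2)=\pi_S^H$. Removing $\mu_2$ together with the single entry $m$ from $\mu$, while preserving the relative order of the remaining entries, yields a permutation $\mu_1$ whose values are exactly those of $\pi_U^H$, and an analogous dynamical argument will give $s(\mu_1)=\pi_U^H$: deleting both the block $\mu_2$ and the entry $m$ causes the stack-sort of $\mu_1$ to ``splice over'' the sheltered portion, reproducing $\pi_U^H$.

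For the inverse direction, given a hook $H\in\SW_d(\pi)$ with NE endpoint $(j,j)$ and a pair $(\mu_1,\mu_2)\in s^{-1}(\pi_U^H)\times s^{-1}(\pi_S^H)$, the permutation $\mu$ is reconstructed by reinserting the entry $m=j$ and the contiguous block $\mu_2$ back into $\mu_1$ at the unique positions dictated by a simulation of the stack-sorting algorithm on $\mu_1$. The main obstacle will be rigorously verifying that this cut-and-paste procedure is a well-defined bijection whose forward and inverse maps preserve the relation $s(\mu)=\pi$. The delicate points are the contiguity of $\mu_2$ inside $\mu$ and the independence of the stack dynamics on the two pieces. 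The tail-bound hypothesis plays the essential role here: it precludes any element of $\mu_1$ from being interleaved into the sheltered interval, since such an element would eventually appear in $\pi$ at a position $>d$ with value $>\pi_d$ that is not in the tail, contradicting tail-boundedness. Once this verification is complete, counting both sides of the bijection yields the claimed identity.
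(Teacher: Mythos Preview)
This lemma is not proved in the paper; it is quoted from \cite{DefantCounting}, where it is obtained as a corollary of the Refined Decomposition Lemma via the combinatorics of (canonical) valid hook configurations rather than by a direct stack-trace bijection. So there is no in-paper argument to compare against, and your approach is already methodologically different from the cited source.

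Your bijective plan is close to a correct direct proof, but your definition of $m$ in case~(a) breaks injectivity. Take $\pi=2134$ with $d=1$; this descent is tail-bound. The permutations $4231$, $2431$, $2413$ all lie in $s^{-1}(\pi)$. For $2431$ and $2413$ the stack empties when $\pi_1=2$ is popped and your rule gives $m=4$; for $4231$, the entry $4$ sits beneath $2$ at that moment, so your case~(a) also gives $m=4$. Thus three preimages are sent to the hook $H$ with northeast endpoint $(4,4)$, but $|s^{-1}(\pi_U^H)|\cdot|s^{-1}(\pi_S^H)|=|s^{-1}(2)|\cdot|s^{-1}(13)|=2$; concretely, both $4231$ and $2431$ map to the triple $(H,(2),(31))$. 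The repair is to take $m$ to be the input letter that triggers the pop of $\pi_d$ in \emph{every} case. Such a trigger always exists (since $d$ is a descent, $\pi_d$ cannot be popped during the final flush) and exceeds $\pi_d$; the tail-bound hypothesis then forces its output position into the tail exactly as you argue. With this choice $m\,\mu_2$ is a single contiguous block of $\mu$, $\mu_1$ is $\mu$ with that block deleted, and the verifications that $s(\mu_1)=\pi_U^H$, $s(\mu_2)=\pi_S^H$, and that the insertion inverts the deletion all go through along the lines you sketch. In the example, $4231$ now receives $m=3$, the hook with northeast endpoint $(3,3)$, $\mu_1=(4,2)$, $\mu_2=(1)$, and the collision disappears.
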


\section{Enumerating $s^{-1}(\Av(231,321))$}\label{Sec:Unbalanced}

In his dissertation, West \cite{West} proved that a permutation is in $s^{-1}(\Av(231))$ (that is, it is $2$-stack-sortable) if and only if it avoids the pattern $2341$ and avoids any $3241$ pattern that is not part of a $35241$ pattern. As mentioned in the introduction, $s^{-1}(\Av(321))=\Av(34251,35241,45231)$. Combining these two facts, it is straightforward to check that 
\begin{equation}\label{Eq24}
s^{-1}(\Av(231,321))=\Av(2341,3241,45231).
\end{equation} In this section, we use the Decomposition Lemma to prove the following theorem. 
\begin{theorem}\label{Thm5}
We have \[\sum_{n\geq 0}|s^{-1}(\Av_n(231,321))|x^n=\frac{1}{1-xC(xC(x))},\] where $C(x)=\dfrac{1-\sqrt{1-4x}}{2x}$ is the generating function of the sequence of Catalan numbers. 
\end{theorem}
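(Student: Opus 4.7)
The plan is to apply the Decomposition Lemma at the last descent of each $\pi \in \Av(231, 321)$ and to translate the resulting fertility formula into a functional equation for the bivariate generating function
\[
T(x, y) \;=\; \sum_{\pi \in \Av(231, 321)} |s^{-1}(\pi)|\,x^{|\pi|}\,y^{\tl(\pi)},
\]
which we will then solve for $T(x, 1)$ via the kernel method. The starting point is the structural claim that the last descent $d$ of any $\pi \in \Av(231, 321)$ (provided $\pi$ has a descent) is tail-bound. Otherwise there would be a position $j \in (d, n - \tl(\pi)]$ with $\pi_j > \pi_d$, and a case analysis on the position of the value $n - \tl(\pi)$ yields respectively a $321$-pattern $\pi_i\pi_d\pi_{d+1}$ (if that position is at most $d$), an immediate numerical contradiction (if it equals $d$), or an extension of the tail beyond $\tl(\pi)$ (if it is larger than $d$).

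Combining the standard $\Av(231)$-decomposition $\pi = L n R$ with the $\Av(321)$-condition that forces $R$ to be increasing, one shows that every $\pi \in \Av_n(231, 321)$ with $\tl(\pi) = \ell \geq 1$ and at least one descent has the unique form
\[
\pi \;=\; L \cdot (n - \ell) \cdot d(d + 1)\cdots(n - \ell - 1) \cdot (n - \ell + 1)(n - \ell + 2)\cdots n,
\]
with $L \in \Av_{d - 1}(231, 321)$ and $1 \leq d \leq n - \ell - 1$. The Decomposition Lemma applied at position $d$ (where $\pi_d = n - \ell$) uses the $\ell$ hooks with northeast endpoints $(n - \ell + 1 + k,\,n - \ell + 1 + k)$ for $k \in \{0, 1, \dots, \ell - 1\}$: the sheltered subpermutation normalizes to an identity of length $(n - \ell - d) + k$ with fertility $C_{(n - \ell - d) + k}$, while the unsheltered subpermutation normalizes to $L$ followed by an identity of length $\ell - k$. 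Writing $F(L, r) := |s^{-1}(L \cdot (|L| + 1)(|L| + 2)\cdots (|L| + r))|$ yields
\[
|s^{-1}(\pi)| \;=\; \sum_{r = 1}^{\ell} F(L, r)\,C_{(n - \ell - d) + (\ell - r)}.
\]

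To convert this into a functional equation, two identities are useful. Summing over middle-lengths $c = n - \ell - d \geq 1$ and expanding gives $\sum_{c \geq 1} x^c \sum_{s \geq 0} C_{c + s} (xy)^s = (C(xy) - C(x))/(y - 1)$. Moreover, each $\pi \in \Av(231, 321)$ can be written in exactly $\tl(\pi) + 1$ ways as $L$ concatenated with an identity of length $r \in \{0, 1, \dots, \tl(\pi)\}$, so the auxiliary series $\sum_L x^{|L|} \sum_{r \geq 0} F(L, r) (xy)^r$ equals $(y T(x, y) - T(x, 1))/(y - 1)$. Combining these with the identity contribution $C(xy)$ collapses the recursion into
\[
T(x, y) \;=\; C(xy) \;+\; \frac{xy\bigl(C(xy) - C(x)\bigr)}{(y - 1)^2}\,\bigl(T(x, y) - T(x, 1)\bigr),
\]
equivalently $K(x, y)\,T(x, y) = (y - 1)^2 C(xy) - xy(C(xy) - C(x))\,T(x, 1)$ with kernel $K(x, y) = (y - 1)^2 - xy(C(xy) - C(x))$.

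Finally, the kernel method. Factoring $K = (y - 1)\,K_1$, one computes $K_1(x, 1) = -x^2 C'(x) \not\equiv 0$, so there is a unique power-series root $y_0(x)$ of $K_1$ with $y_0(0) = 1$. Substituting $y = y_0$ into the equation and invoking $(y_0 - 1)^2 = xy_0(C(xy_0) - C(x))$ forces $T(x, 1) = C(xy_0(x))$. To identify $y_0$ in closed form, introduce $u := xC(x)$ and $w := uC(u)$, so that $u = x + u^2$ and $w = u + w^2$. Taking $y_0 := 1 + w^2$ and using the elementary identities $1 - u = 1 - w + w^2$ and $x = w(1 - w)(1 - u)$, a direct computation confirms $xy_0(C(xy_0) - C(x)) = w^4 = (y_0 - 1)^2$, so this $y_0$ is the desired root; the same substitutions simplify $C(xy_0) = \frac{1}{(1 - w)(1 + w^2)} = \frac{1}{1 - (1 - u)w} = \frac{1}{1 - x\,C(xC(x))}$, which is the target generating function. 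The main difficulty is recognizing the closed form of $y_0$: the root equation is implicit and offers no obvious solution, but the nested Catalan structure in the target motivates the ansatz $y_0 - 1 = w^2$, after which everything reduces to a short polynomial identity in $w$ via the Catalan functional equations for $u$ and $w$.
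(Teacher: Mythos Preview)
Your proof is correct and follows the same overall architecture as the paper: apply the Decomposition Lemma at the position of the largest non-tail entry (which coincides with the last descent), track tail length as a catalytic variable, and solve the resulting functional equation by the kernel method. The combinatorial recurrence you obtain is equivalent to the paper's \eqref{Eq35}, just packaged via a different bivariate generating function (you track exact tail length against total length, whereas the paper tracks ``tail length at least $\ell$'' against non-tail length).

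The one place where the arguments genuinely diverge is the algebraic endgame. The paper does not solve for the kernel root explicitly; instead it eliminates $Y$ to produce a quartic polynomial $Q(a,x)$, shows that both $I(x,0)$ and the target series $\dfrac{1}{1-xC(xC(x))}$ satisfy $Q(\cdot,x)=0$, and then appeals to uniqueness of the branch with the correct initial expansion. Your route is more direct: you guess the closed form $y_0=1+w^2$ with $u=xC(x)$ and $w=uC(u)$, verify it kills the kernel via the Catalan relations $u=w-w^2$ and $x=u-u^2$, and then read off $C(xy_0)=\dfrac{1}{1-(1-u)w}=\dfrac{1}{1-xC(xC(x))}$ immediately. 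This avoids the auxiliary quartic entirely and makes the nested Catalan structure of the answer visible in the proof itself, at the cost of requiring the inspired ansatz $y_0-1=w^2$.
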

This is notable because it tells us that we can use the stack-sorting map and the Decomposition Lemma as tools in order to enumerate the permutation class $\Av(2341,3241,45231)$, which, as far as we are aware, has not been enumerated yet. As mentioned in the introduction, $\dfrac{1}{1-xC(xC(x))}$ is also the generating function of the sequence that enumerates the permutation classes $\Av(4321,4213)$ \cite{Bloom, Callan} and $\Av(2413,2431,23145)$ \cite{Mansour}. In particular, Theorem \ref{Thm5} tells us that the permutation classes \[\Av(4321,4213)\quad\text{and}\quad \Av(2341,3241,45231)\] form a new example of an unbalanced Wilf equivalence, as defined in \cite{Bloom2, Burstein}. 

\begin{proof}[of Theorem \ref{Thm5}]
Let \[\mathcal D_\ell(n)=\{\pi\in\Av_{n+\ell}(231,321):\tl(\pi)=\ell\}\quad\text{and}\quad\mathcal D_{\geq\ell}(n)=\{\pi\in\Av_{n+\ell}(231,321):\tl(\pi)\geq\ell\}.\] Let $B_\ell(n)=|s^{-1}(\mathcal D_\ell(n))|$ and $B_{\geq\ell}(n)=|s^{-1}(\mathcal D_{\geq\ell}(n))|$.

Suppose $\pi\in\mathcal D_\ell(n+1)$ is such that $\pi_{n+1-i}=n+1$ (see Figure \ref{Fig3}). Note that $n+1-i$ is a tail-bound descent of $\pi$ because every point in the plot of $\pi$ that is higher than $(n+1-i,n+1)$ is in the tail of $\pi$. The Decomposition Lemma tells us that $|s^{-1}(\pi)|$ is equal to the number of triples $(H,\mu,\lambda)$, where $H\in\SW_{n+1-i}(\pi)$, $\mu\in s^{-1}(\pi_U^H)$, and $\lambda\in s^{-1}(\pi_S^H)$. Choosing $H$ amounts to choosing the number $j\in\{1,\ldots,\ell\}$ such that the northeast endpoint of $H$ is $(n+1+j,n+1+j)$. The permutation $\pi$ and the choice of $H$ determine the permutations $\pi_U^H$ and $\pi_S^H$. On the other hand, the choices of $H$ and the permutations $\pi_U^H$ and $\pi_S^H$ uniquely determine $\pi$. It follows that $B_\ell(n+1)$, which is the number of ways to choose an element of $s^{-1}(\mathcal D_\ell(n+1))$, is also the number of ways to choose $j$, the permutations $\pi_U^H$ and $\pi_S^H$, and the permutations $\mu$ and $\lambda$. Let us fix a choice of $j$. 

\begin{figure}[h]
 \begin{center} {\includegraphics[width=0.43\textwidth]{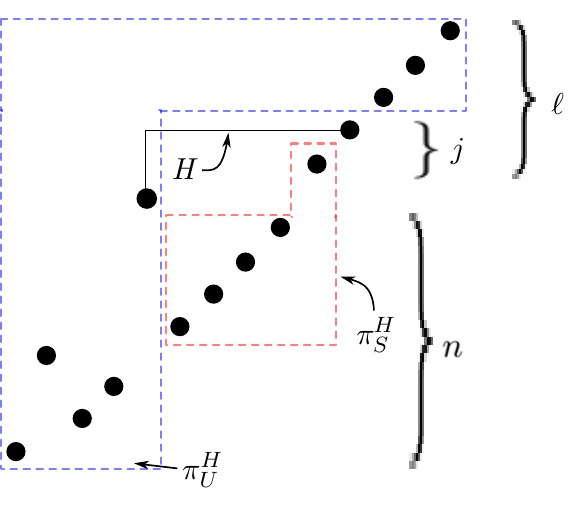}}
 \end{center}
  \caption{An example of a permutation $\pi\in D_{5}(9)$ and a hook $H$ with $i=4$ and $j=2$ (in the notation of the proof of Theorem \ref{Thm5}).}\label{Fig3}
\end{figure} 

Because $\pi$ avoids $231$, $\pi_U^H$ must be a permutation of the set \[\{1,\ldots,n-i\}\cup\{n+1\}\cup\{n+2+j,\ldots,n+\ell+1\},\] while $\pi_S^H$ must be a permutation of $\{n-i+1,\ldots,n+j\}\setminus\{n+1\}$. In fact, $\pi_S^H$ is the increasing permutation of the $(i+j-1)$-element set $\{n-i+1,\ldots,n+j\}\setminus\{n+1\}$ because $\pi$ avoids $321$. According to Theorem \ref{Thm1}, there are $C_{i+j-1}$ choices for $\lambda$. Choosing $\pi_U^H$ is equivalent to choosing its standardization, which is an element of $\mathcal D_{\geq \ell-j+1}(n-i)$. Any element of $\mathcal D_{\geq \ell-j+1}(n-i)$ can be chosen as the standardization of $\pi_U^H$. Also, $\pi_U^H$ has the same fertility as its standardization. Combining these facts, we find that the number of choices for $\pi_U^H$ and $\mu$ is $|s^{-1}(\mathcal D_{\geq \ell-j+1}(n-i))|=B_{\geq \ell-j+1}(n-i)$. We obtain the recurrence relation 
\begin{equation}\label{Eq35}
B_\ell(n+1)=\sum_{i=1}^n\sum_{j=1}^\ell C_{i+j-1}B_{\geq \ell-j+1}(n-i).
\end{equation}
Let \[G_\ell(x)=\sum_{n\geq 0}B_{\geq\ell}(n)x^n\quad\text{and}\quad I(x,y)=\sum_{\ell\geq 0}G_\ell(x)y^\ell.\] Note that \[G_\ell(0)=B_{\geq \ell}(0)=|s^{-1}(123\cdots\ell)|=C_\ell\] by Theorem \ref{Thm1}. Our goal is to understand the generating function \[I(x,0)=G_0(x)=\sum_{n\geq 0}B_{\geq 0}(n)x^n=\sum_{n\geq 0}|s^{-1}(\Av_n(231,321))|x^n.\] 

The recurrence \eqref{Eq35} tells us that \[\sum_{n\geq 0}B_\ell(n+1)x^n=\sum_{j=1}^\ell\sum_{n\geq 0}\sum_{i=1}^n C_{i+j-1}B_{\geq \ell-j+1}(n-i)x^n=\sum_{j=1}^\ell\left(\sum_{i\geq 1}C_{i+j-1}x^i\right)G_{\ell-j+1}(x),\] so \[\sum_{\ell\geq 0}\sum_{n\geq 0}B_\ell(n+1)x^ny^\ell=\sum_{\ell\geq 0}\sum_{j=1}^\ell\left(\sum_{i\geq 1}C_{i+j-1}x^i\right)G_{\ell-j+1}(x)y^\ell\] 
\begin{equation}\label{Eq25}=\left(\sum_{j\geq 1}\sum_{i\geq 1}C_{i+j-1}x^iy^{j-1}\right)(I(x,y)-I(x,0))=x\frac{C(x)-C(y)}{x-y}(I(x,y)-I(x,0)).
\end{equation} 
On the other hand, \[\sum_{\ell\geq 0}\sum_{n\geq 0}B_\ell(n+1)x^ny^\ell=\sum_{\ell\geq 0}\sum_{n\geq 0}B_{\geq\ell}(n+1)x^ny^\ell-\sum_{\ell\geq 0}\sum_{n\geq 0}B_{\geq \ell+1}(n)x^ny^\ell\]
\begin{equation}\label{Eq5}
=\frac{1}{x}\sum_{\ell\geq 0}(G_\ell(x)-C_\ell)y^\ell-\frac{1}{y}\sum_{\ell\geq 0}G_{\ell+1}(x)y^{\ell+1}=\frac{I(x,y)-C(y)}{x}-\frac{I(x,y)-I(x,0)}{y}.
\end{equation} 
Combining \eqref{Eq25} and \eqref{Eq5} and rearranging terms, we get the equation 
\begin{equation}\label{Eq27}
x(I(x,y)-I(x,0))\left(x\frac{C(x)-C(y)}{x-y}-\frac{1}{x}+\frac{1}{y}\right)=I(x,0)-C(y).
\end{equation} 

We now employ the kernel method (see \cite{Bousquet4,Bousquet3,Banderier,
Prodinger} for more on this method). There is a unique power series $Y=Y(x)$ such that $Y(x)=x+O(x^2)$ and $x\dfrac{C(x)-C(Y)}{x-Y}-\dfrac{1}{x}+\dfrac{1}{Y}=0$. Substituting this into \eqref{Eq27}, we find that $I(x,0)=C(Y)$. For ease of notation, let $u=I(x,0)$ and $v=\dfrac{1}{1-xC(xC(x))}$. We wish to show that $u=v$. 

Combining the equation $x\dfrac{C(x)-C(Y)}{x-Y}-\dfrac{1}{x}+\dfrac{1}{Y}=0$ with the standard Catalan functional equation $YC(Y)^2+1-C(Y)=0$ yields \[u=C(Y)=C(x)+\frac{(x-Y)^2}{x^2Y}=C(x)+\frac{(x-\frac{u-1}{u^2})^2}{x^2\frac{u-1}{u^2}}.\] If we solve for $C(x)$ and use the functional equation $xC(x)^2+1-C(x)=0$, we obtain \[x\left(u-\frac{(x-\frac{u-1}{u^2})^2}{x^2\frac{u-1}{u^2}}\right)^2+1-\left(u-\frac{(x-\frac{u-1}{u^2})^2}{x^2\frac{u-1}{u^2}}\right)=0.\] We can simplify this last equation to find that $Q(u,x)\dfrac{1-u+xu^2}{x^3u^4(u-1)^2}=0$, where $Q$ is the polynomial given by 
\begin{equation}\label{Eq28}
Q(a,x)=1-3a+(3+2x)a^2+(-1-4x+2x^2)a^3+(2x-2x^2+x^3)a^4.
\end{equation} It is easy to see that $1-u+xu^2\neq 0$, so $Q(u,x)=0$. 

We have $1-\dfrac{1}{v}=xC(xC(x))$, so \[C(x)\left(1-\frac{1}{v}\right)^2=x^2C(x)C(xC(x))^2=x(C(xC(x))-1),\] where the last equality follows from the Catalan functional equation. This means that \[1-x-C(x)\left(1-\frac{1}{v}\right)^2=1-xC(xC(x))=\frac{1}{v}.\] It follows that $C(x)=\dfrac{1-\frac{1}{v}-x}{(1-\frac{1}{v})^2}$, so we can use the Catalan functional equation to find that \[x\left(\frac{1-\frac{1}{v}-x}{(1-\frac{1}{v})^2}\right)^2+1-\frac{1-\frac{1}{v}-x}{(1-\frac{1}{v})^2}=0.\] After simplifying this equation, we find that $Q(v,x)\frac{1}{(v-1)^4}=0$, where $Q$ is the polynomial in \eqref{Eq28}. Hence, $Q(v,x)=0$. 

We now know that $Q(u,x)=Q(v,x)=0$. There are four Laurent series $F(x)$ satisfying $Q(F(x),x)\!=0$, but only one satisfies $F(x)=1+x+O(x^2)$. This proves that $u=v$, as desired. 
\end{proof}

\begin{remark}\label{Rem1}
Now that we have determined $I(x,0)$, one could use \eqref{Eq27} to find the bivariate generating function $I(x,y)$, which counts the permutations in the set \[s^{-1}(\Av(231,321))=\Av(2341,3241,45231)\] according to length and an additional statistic. 
\end{remark}

As mentioned in the introduction, Theorem \ref{Thm5} is equivalent to the enumeration of permutations that are sortable via the map $\Bubb\circ s$, where $\Bubb$ is the bubble sort map. 

\section{Enumerating $s^{-1}(\Av(132,231))$}\label{Sec:Boolean}
Many analogues and generalizations of classical permutation patterns have emerged over the past few decades. One of the most notable notions is that of a ``vincular pattern," which appeared first in \cite{Babson} and has garnered a large amount of attention ever since. We refer the reader to the survey \cite{Steingrimsson} and the references therein for further background. A \emph{vincular pattern} is a permutation pattern in which some consecutive entries can be underlined. We say a permutation \emph{contains} a vincular pattern if it contains an occurrence of the permutation pattern in which underlined entries are consecutive. For example, saying that a permutation $\sigma=\sigma_1\cdots\sigma_n$ contains the vincular pattern $\underline{32}41$ means that there are indices $i_1<i_2<i_3<i_4$ such that $\sigma_{i_4}<\sigma_{i_2}<\sigma_{i_1}<\sigma_{i_3}$ and $i_2=i_1+1$. We say a permutation \emph{avoids} a vincular pattern $\tau$ if it does not contain $\tau$. Let $\Av(\tau^{(1)},\tau^{(2)},\ldots)$ be the set of standardized permutations avoiding the vincular patterns $\tau^{(1)},\tau^{(2)},\ldots$, and let $\Av_n(\tau^{(1)},\tau^{(2)},\ldots)=\Av(\tau^{(1)},\tau^{(2)},\ldots)\cap S_n$. 

There are many fascinating combinatorial properties of the OEIS sequence A071356 \cite{OEIS}; several are listed in \cite{Hossain}, where the numbers in this sequence are named ``Boolean-Catalan numbers." One can define this sequence via its generating function \[\frac{1-2x-\sqrt{1-4x-4x^2}}{4x}.\] Hossain \cite{HossainPrivate} has conjectured that the Boolean-Catalan numbers enumerate the permutations in \[\Av(2341,1342,\underline{32}41,\underline{31}42).\] We will see that this set is precisely $s^{-1}(\Av(132,231))$. We will then enumerate these permutations via the Decomposition Lemma, proving Hossain's conjecture. It is interesting that the only known proof of this conjecture makes heavy use of the stack-sorting map. 

The main theorem of this section is the following.
\begin{theorem}\label{Thm6}
We have \[\sum_{n\geq 1}|s^{-1}(\Av_n(132,312))|x^n=\sum_{n\geq 1}|s^{-1}(\Av_n(231,312))|x^n=\sum_{n\geq 1}|s^{-1}(\Av_n(132,231))|x^n\] \[=\frac{1-2x-\sqrt{1-4x-4x^2}}{4x}.\]
\end{theorem}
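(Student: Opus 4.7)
Since the first two equalities in the display were established in \cite{DefantClass} and \cite{DefantFertilityWilf}, the task reduces to computing $\sum_{n\geq 1}|s^{-1}(\Av_n(132,231))|x^n$. The plan is to follow the template of the proof of Theorem~\ref{Thm5}, exploiting the rigid structure of $\Av(132,231)$ to set up a recurrence via the Decomposition Lemma and then passing to generating functions and applying the kernel method.

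The key structural fact is that if $\pi=LnR$ is the splitting of a nonempty permutation around its maximum $n$, then $231$-avoidance forces the entries of $L$ to be smaller than those of $R$ while $132$-avoidance forces the opposite, so one of $L,R$ must be empty. Iterating this observation above the tail, any $\pi\in\Av_m(132,231)$ with $0<\tl(\pi)=\ell<m$ takes the form $\pi=(m-\ell)\,\sigma\,(m-\ell+1)(m-\ell+2)\cdots m$ for a unique $\sigma\in\Av_{m-\ell-1}(132,231)$. The descent at position $1$ is tail-bound, since every entry exceeding $m-\ell$ lies in the tail. I will apply the Decomposition Lemma at this descent, with hooks indexed by $j\in\{1,\ldots,\ell\}$ according to where the northeast endpoint falls in the tail: the unsheltered subpermutation normalizes to the identity of length $\ell-j+1$ (contributing $C_{\ell-j+1}$), while the sheltered one normalizes to $\sigma$ followed by an increasing run of length $j-1$. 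As $\sigma$ varies over $\Av_n(132,231)$, these sheltered subpermutations trace out exactly $\mathcal D_{\geq j-1}(n)$, defined in direct analogy with Section~\ref{Sec:Unbalanced} but for $\Av(132,231)$. Setting $B_\ell(n)=|s^{-1}(\mathcal D_\ell(n))|$ and $B_{\geq\ell}(n)=|s^{-1}(\mathcal D_{\geq\ell}(n))|$, this yields
\[
B_\ell(n+1)=\sum_{j=1}^\ell C_{\ell-j+1}\,B_{\geq j-1}(n)\qquad(n\geq 1,\ \ell\geq 1).
\]
A convenient boundary fact I will exploit is that $B_0(n)=0$ for all $n\geq 1$, since $s(\pi)$ always ends with the maximum entry and so no nonempty permutation with tail length $0$ has a preimage.

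From here the generating-function machinery runs in parallel with the proof of Theorem~\ref{Thm5}. With $G_\ell(x)=\sum_{n\geq 0}B_{\geq\ell}(n)x^n$ and $I(x,y)=\sum_{\ell\geq 0}G_\ell(x)y^\ell$, and using the relation $\sum_{n\geq 0}B_\ell(n)x^n=G_\ell(x)-xG_{\ell+1}(x)$, the recurrence combined with the initial conditions $B_\ell(0)=C_\ell$ and $B_\ell(1)=0$ (for $\ell\geq 1$) rearranges into a functional equation of the form
\[
I(x,y)\bigl[(1+x)-x/y-xC(y)\bigr]=C(y)(1+x)+\frac{x\bigl[1-C(y)-G_0(x)\bigr]}{y}.
\]
The kernel method then supplies a unique power series $Y=Y(x)$ with $Y(0)=0$ annihilating the bracketed factor; substituting $y=Y$ into the right-hand side and using the Catalan functional equation $YC(Y)^2+1-C(Y)=0$ to eliminate $Y$ in favor of $c:=C(Y)$ produces the quadratic $2xc^2-(1+2x)c+(1+x)=0$. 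Choosing the branch with $c\to 1$ as $x\to 0$ gives $c=\frac{1+2x-\sqrt{1-4x-4x^2}}{4x}$, and rewriting $(1+x)/c=(1+2x)-2xc$ via the quadratic simplifies the substituted equation to $G_0(x)=c$. Subtracting $B_{\geq 0}(0)=1$ then yields the claimed formula for $\sum_{n\geq 1}|s^{-1}(\Av_n(132,231))|x^n$. The main obstacle is the usual bookkeeping challenge for kernel-method arguments: arranging the bivariate recurrence so that the kernel substitution cleanly isolates $G_0(x)$, while carefully tracking initial conditions to ensure validity of the recurrence for all relevant indices.
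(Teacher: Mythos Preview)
Your proposal is correct and follows essentially the same approach as the paper: identify that $\pi_1=n+1$ is a tail-bound descent, apply the Decomposition Lemma to obtain the recurrence $B_\ell(n+1)=\sum_{j=1}^\ell C_{\ell-j+1}B_{\geq j-1}(n)$, pass to the bivariate generating function $I(x,y)$, and solve via the kernel method. Your functional equation is algebraically equivalent to the paper's equation \eqref{Eq32}; the only cosmetic difference is that you eliminate $Y$ by deriving the quadratic $2xc^2-(1+2x)c+(1+x)=0$ in $c=C(Y)$ directly, whereas the paper writes $Y$ explicitly and simplifies afterward.
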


This theorem settles Conjecture 10.1 from \cite{DefantClass}. Before proving it, we show how to deduce from it the following corollary, part of which settles Hossain's conjecture. 

\begin{corollary}
We have \[\sum_{n\geq 1}|\Av_n(2341,1342,\underline{32}41,\underline{31}42)|x^n=\sum_{n\geq 1}|\Av_n(1342,3142,3412,34\underline{21})|x^n\] \[=\frac{1-2x-\sqrt{1-4x-4x^2}}{4x}.\]
\end{corollary}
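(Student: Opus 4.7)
The corollary is an immediate consequence of Theorem~\ref{Thm6} once one verifies the two permutation-class identities
\[
s^{-1}(\Av(132,231)) = \Av(2341,1342,\underline{32}41,\underline{31}42) \qquad \text{(A)}
\]
and
\[
s^{-1}(\Av(132,312)) = \Av(1342,3142,3412,34\underline{21}) \qquad \text{(B)}
\]
asserted in the narrative leading up to the theorem. My plan is to prove (A) and (B) by first giving a vincular-pattern characterization of $s^{-1}(\Av(\tau))$ for each single pattern $\tau\in\{132,231,312\}$, and then intersecting.

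For $\tau=231$, West~\cite{West} proved that $\pi\in s^{-1}(\Av(231))$ iff $\pi$ avoids $2341$ and every $3241$ in $\pi$ is part of a $35241$; a short argument, entirely analogous to the derivation of \eqref{Eq24} in Section~\ref{Sec:Unbalanced}, shows this is equivalent to $\pi\in\Av(2341,\underline{32}41)$. I would prove the analogous statements $s^{-1}(\Av(132)) = \Av(1342,\underline{31}42)$ and $s^{-1}(\Av(312)) = \Av(3142,3412,34\underline{21})$ by the same style of direct analysis. The key structural observation is that $s(\pi)=s(L)s(R)n$ when $\pi=LnR$, so the maximum value $n$ always sits in the final position of $s(\pi)$; because in each of the patterns $132$, $231$, $312$ the last position holds a non-maximal value, the entry $n$ cannot participate in any occurrence of these patterns in $s(\pi)$. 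Hence $s(\pi)$ contains such a $\tau$ iff $s(L)s(R)$ does, and an induction on $|\pi|$ combined with a case analysis of how an occurrence of $\tau$ in $s(L)s(R)$ straddles the boundary between $s(L)$ and $s(R)$ pins down exactly which vincular patterns in $\pi$ are responsible.

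Once these three single-pattern characterizations are in hand, (A) and (B) follow as immediate intersections:
\[
s^{-1}(\Av(132,231)) = \Av(1342,\underline{31}42) \cap \Av(2341,\underline{32}41) = \Av(2341,1342,\underline{32}41,\underline{31}42),
\]
and
\[
s^{-1}(\Av(132,312)) = \Av(1342,\underline{31}42) \cap \Av(3142,3412,3 4\underline{21}) = \Av(1342,3142,3412,34\underline{21}),
\]
where in the second line $\underline{31}42$ is absorbed because every occurrence of $\underline{31}42$ is \emph{a fortiori} an occurrence of $3142$. Combining (A) and (B) with Theorem~\ref{Thm6} gives the claimed generating functions and completes the proof of the corollary.

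The main obstacle is the bookkeeping required to pin down the single-pattern characterizations of $s^{-1}(\Av(132))$ and $s^{-1}(\Av(312))$. The recursive decomposition $s(\pi)=s(L)s(R)n$ is conceptually clean, but tracking how an occurrence of $132$ or $312$ in $s(L)s(R)$ can split across the boundary—and translating each straddling configuration back into a pattern in $\pi$ via the inductive hypothesis—produces several subcases; each must be checked to confirm that the vincular patterns listed above are the only obstructions.
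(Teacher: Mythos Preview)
Your strategy rests on the claim that $s^{-1}(\Av(231)) = \Av(2341,\underline{32}41)$, but this identity is false. Take $\pi = 41352$: one checks directly that $\pi$ avoids both $2341$ and $\underline{32}41$, yet $s(\pi) = 13425$ contains the $231$ pattern $342$. In West's language, the subsequence $4,3,5,2$ at positions $1,3,4,5$ is a $3241$ pattern not contained in any $35241$, so $\pi$ is not $2$-stack-sortable. The point is that ``every $3241$ lies inside a $35241$'' is strictly stronger than avoiding $\underline{32}41$: even when the $3$ and the $2$ are non-adjacent, the intermediate entries may all be too small to serve as the required $5$. Thus $s^{-1}(\Av(231)) \subsetneq \Av(2341,\underline{32}41)$, and your intersection argument for (A) does not go through. (Your appeal to the derivation of \eqref{Eq24} is also misplaced: that derivation concerns the \emph{classical} pattern $3241$, not the vincular $\underline{32}41$, and it succeeds precisely because the extra $321$-avoidance hypothesis kills the $35241$ exception.)

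The paper's proof avoids this by not factoring through single-pattern preimages. It establishes (A) directly: for the harder inclusion it assumes $\sigma$ avoids $2341$, $1342$, and $\underline{32}41$ simultaneously and argues that $s(\sigma)$ cannot contain $231$. The minimality argument there, in the problematic sub-case, produces a $1342$ pattern in $\sigma$ (exactly the configuration $1,3,5,2$ visible in the counterexample above), and it is the assumed $1342$-avoidance that delivers the contradiction. So the $132$-side hypothesis is genuinely needed to handle the $231$-side conclusion; the two pieces do not decouple. Your companion claim $s^{-1}(\Av(132)) = \Av(1342,\underline{31}42)$ is in fact correct, so the error is localized to the $231$ characterization---but repairing it forces you essentially into the paper's joint argument rather than a clean product of single-pattern descriptions.
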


\begin{proof}
This corollary will follow immediately from Theorem \ref{Thm6} if we can show that \[s^{-1}(\Av(132,231))=\Av(2341,1342,\underline{32}41,\underline{31}42)\] and \[ s^{-1}(\Av(132,312))=\Av(1342,3142,3412,34\underline{21}).\] We prove the first of these equalities, the proof of the second is similar. If $\sigma$ contains $2341$ or $\underline{32}41$, then it follows from West's characterization of $2$-stack-sortable permutations (mentioned at the start of Section \ref{Sec:Unbalanced}) that $s(\sigma)$ contains $231$. It is also straightforward to check that $s(\sigma)$ contains $132$ if $\sigma$ contains $1342$ or $\underline{31}42$. This proves the containment \[s^{-1}(\Av(132,231))\subseteq \Av(2341,1342,\underline{32}41,\underline{31}42).\] 

Now suppose $s(\sigma)$ contains either $132$ or $231$. We want to prove that $\sigma$ contains one of the vincular patterns $2341,1342,\underline{32}41,\underline{31}42$. We will assume that $\sigma$ avoids $2341,1342,\underline{32}41$ and prove that it contains $\underline{31}42$. Let us first assume $s(\sigma)$ contains $231$. Using West's characterization of $2$-stack-sortable permutations and the assumption that $\sigma$ avoids $2341$, we see that $\sigma$ contains a $3241$ pattern that is not part of a $35241$ pattern. Therefore, there are indices $i_1<i_2<i_3<i_4$ such that $\sigma_{i_4}<\sigma_{i_2}<\sigma_{i_1}<\sigma_{i_3}$ and such that no entry lying between $\sigma_{i_1}$ and $\sigma_{i_2}$ is larger than $\sigma_{i_3}$. We may assume that among all such choices for $i_1,i_2,i_3,i_4$, we have made a choice that minimizes $i_2-i_1$. Because $\sigma$ avoids $\underline{32}41$, $i_2\geq i_1+2$. Our choice of $i_1,i_2,i_3,i_4$ guarantees that $\sigma_{i_1+1}<\sigma_{i_3}$. One can check that the minimality of $i_2-i_1$ forces $\sigma_{i_1+1}<\sigma_{i_4}$. However, this means that the standardization of $\sigma_{i_1+1}\sigma_{i_2}\sigma_{i_3}\sigma_{i_4}$ is $1342$, contradicting our assumption that $\sigma$ avoids $1342$. This shows that $s(\sigma)$ avoids $231$, so it must contain $132$. 

One can show \cite{Claesson, DefantClass} that a permutation is in $s^{-1}(\Av(132))$ if and only if it avoids the pattern $1342$ and avoids any $3142$ pattern that is not part of either a $34152$ pattern or a $35142$ pattern. Since we are assuming that $s(\sigma)$ contains $132$ and that $\sigma$ avoids $1342$, $\sigma$ must contain a $3142$ pattern that is not part of either a $34152$ pattern or a $35142$ pattern. This means that there are indices $i_1<i_2<i_3<i_4$ such that $\sigma_{i_2}<\sigma_{i_4}<\sigma_{i_1}<\sigma_{i_3}$ and such that no entry lying between $\sigma_{i_1}$ and $\sigma_{i_2}$ is larger than $\sigma_{i_1}$. We may assume that among all such choices for $i_1,i_2,i_3,i_4$, we have made a choice that minimizes $i_2-i_1$. In fact, one can show that this minimality assumption forces $i_2-i_1=1$. Hence, $\sigma_{i_1}\sigma_{i_2}\sigma_{i_3}\sigma_{i_4}$ is an occurrence of the vincular pattern $\underline{31}42$ in $\sigma$.
\end{proof}

In the following proof, we recycle our notation from the proof of Theorem \ref{Thm5} in Section \ref{Sec:Unbalanced}. This is meant to elucidate the parallels between the proofs. 

\begin{proof}[of Theorem \ref{Thm6}]
The first and second equalities in Theorem \ref{Thm6} were proven in \cite{DefantClass} and \cite{DefantFertilityWilf}, respectively, so we need only prove the last equality. Let \[\mathcal D_\ell(n)=\{\pi\in\Av_{n+\ell}(132,231):\tl(\pi)=\ell\}\quad\text{and}\quad\mathcal D_{\geq\ell}(n)=\{\pi\in\Av_{n+\ell}(132,231):\tl(\pi)\geq\ell\}.\] Let $B_\ell(n)=|s^{-1}(\mathcal D_\ell(n))|$ and $B_{\geq\ell}(n)=|s^{-1}(\mathcal D_{\geq\ell}(n))|$.

Suppose $n\geq 1$ and $\pi\in\mathcal D_\ell(n+1)$ (see Figure \ref{Fig4}). Using the fact that $\pi$ avoids $132$ and $231$ and has tail length $\ell<n+\ell+1$, we easily find that $\pi_1=n+1$ and that $1$ is a tail-bound descent of $\pi$. 
The Decomposition Lemma tells us that $|s^{-1}(\pi)|$ is equal to the number of triples $(H,\mu,\lambda)$, where $H\in\SW_1(\pi)$, $\mu\in s^{-1}(\pi_U^H)$, and $\lambda\in s^{-1}(\pi_S^H)$. Choosing $H$ amounts to choosing the number $j\in\{1,\ldots,\ell\}$ such that the northeast endpoint of $H$ is $(n+1+j,n+1+j)$. The permutation $\pi$ and the choice of $H$ determine the permutations $\pi_U^H$ and $\pi_S^H$. On the other hand, the choices of $H$ and the permutations $\pi_U^H$ and $\pi_S^H$ uniquely determine $\pi$. It follows that $B_\ell(n+1)$, which is the number of ways to choose an element of $s^{-1}(\mathcal D_\ell(n+1))$, is also the number of ways to choose $j$, the permutations $\pi_U^H$ and $\pi_S^H$, and the permutations $\mu$ and $\lambda$. Let us fix a choice of $j$.  

\begin{figure}[h]
\begin{center}
  {\includegraphics[width=0.487\textwidth]{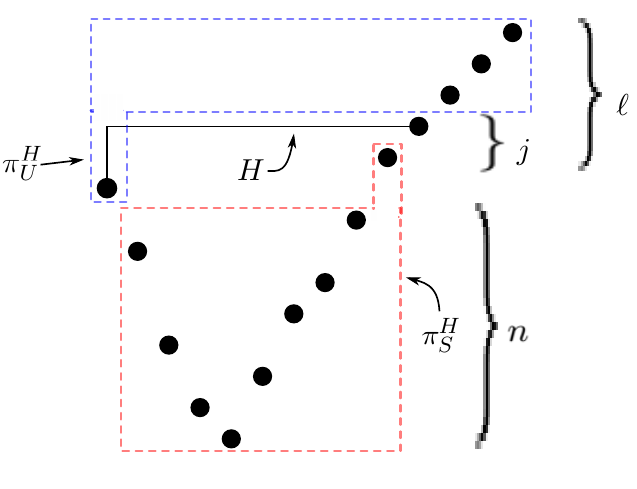}}\end{center}
  \caption{An example of a permutation $\pi\in D_{5}(9)$ and a hook $H$ with $j=2$ (in the notation of the proof of Theorem \ref{Thm6}).}\label{Fig4}
\end{figure} 

The permutation $\pi_U^H$ must be the increasing permutation $(n+1)(n+2+j)\cdots(n+\ell+1)$ of length $\ell-j+1$. By Theorem \ref{Thm1}, there are $C_{\ell-j+1}$ choices for $\mu$. Now, $\pi_S^H$ is a permutation of the $(n+j-1)$-element set $\{1,\ldots,n+j\}\setminus\{n+1\}$, so choosing $\pi_S^H$ is equivalent to choosing its standardization. This standardization is in $\mathcal D_{\geq j-1}(n)$. Any element of $\mathcal D_{\geq j-1}(n)$ can be chosen as the standardization of $\pi_S^H$. Also, $\pi_S^H$ has the same fertility as its standardization. Combining these facts, we find that the number of choices for $\pi_S^H$ and $\lambda$ is $|s^{-1}(\mathcal D_{\geq j-1}(n))|=B_{\geq j-1}(n)$. 
We obtain the recurrence relation 
\begin{equation}\label{Eq29}
B_\ell(n+1)=\sum_{j=1}^\ell C_{\ell-j+1}B_{\geq j-1}(n)\quad\text{for}\quad n\geq 1.
\end{equation} 
Let \[G_\ell(x)=\sum_{n\geq 0}B_{\geq\ell}(n)x^n\quad\text{and}\quad I(x,y)=\sum_{\ell\geq 0}G_\ell(x)y^\ell.\] Note that \[G_\ell(0)=B_{\geq \ell}(0)=|s^{-1}(123\cdots\ell)|=C_\ell\] by Theorem \ref{Thm1}. Let $C(x)=\sum_{n\geq 0}C_nx^n=\dfrac{1-\sqrt{1-4x}}{2x}$ be the generating function of the sequence of Catalan numbers. Since $B_{\geq 0}(n)=|s^{-1}(\Av_n(132,231))|$, our goal is to understand the generating function \[I(x,0)-1=G_0(x)-1=\sum_{n\geq 0}B_{\geq 0}(n)x^n-1=\sum_{n\geq 1}|s^{-1}(\Av_n(132,231))|x^n.\] 

Note that $B_\ell(1)=0$ because there are no permutations of length $\ell+1$ with tail length $\ell$. Therefore, the recurrence \eqref{Eq29} tells us that \[\sum_{n\geq 0}B_\ell(n+1)x^n=\sum_{n\geq 1}B_\ell(n+1)x^n=\sum_{j=1}^\ell C_{\ell-j+1}\sum_{n\geq 1}B_{\geq j-1}(n)x^n=\sum_{j=1}^\ell C_{\ell-j+1}(G_{j-1}(x)-C_{j-1}).\] Consequently, 
\begin{equation}\label{Eq30}
\sum_{\ell\geq 0}\sum_{n\geq 0}B_\ell(n+1)x^ny^\ell=\sum_{\ell\geq 0}\sum_{j=1}^\ell C_{\ell-j+1}(G_{j-1}(x)-C_{j-1})y^\ell=(C(y)-1)(I(x,y)-C(y)).
\end{equation}
On the other hand, \[\sum_{\ell\geq 0}\sum_{n\geq 0}B_\ell(n+1)x^ny^\ell=\sum_{\ell\geq 0}\sum_{n\geq 0}B_{\geq\ell}(n+1)x^ny^\ell-\sum_{\ell\geq 0}\sum_{n\geq 0}B_{\geq \ell+1}(n)x^ny^\ell\]
\begin{equation}\label{Eq31}
=\frac{1}{x}\sum_{\ell\geq 0}(G_\ell(x)-C_\ell)y^\ell-\frac{1}{y}\sum_{\ell\geq 0}G_{\ell+1}(x)y^{\ell+1}=\frac{I(x,y)-C(y)}{x}-\frac{I(x,y)-I(x,0)}{y}.
\end{equation} Combining \eqref{Eq30} and \eqref{Eq31} and rearranging terms, we get the equation 
\begin{equation}\label{Eq32}
I(x,y)(xy(C(y)-1)-y+x)=-yC(y)+xI(x,0)+xyC(y)(C(y)-1).
\end{equation} 

As in the previous section, we use the kernel method. Let \[Y=Y(x)=\frac{3x+2x^2-x\sqrt{1-4x-4x^2}}{2(1+x)^2}.\] After verifying that $xY(C(Y)-1)-Y+x=0$, we use \eqref{Eq32} to find that \[I(x,0)-1=-1+\frac{1}{x}(YC(Y))-xYC(Y)(C(Y)-1).\] After some elementary algebraic manipulations, this expression simplifies to the generating function \[\frac{1-2x-\sqrt{1-4x-4x^2}}{4x},\] as desired.
\end{proof}

\begin{remark}\label{Rem3}
Now that we have determined $I(x,0)$, one could use \eqref{Eq32} to find the bivariate generating function $I(x,y)$, which counts the permutations in the set \[s^{-1}(\Av(132,231))=\Av(2341,1342,\underline{32}41,\underline{31}42)\] according to length and an additional statistic. 
\end{remark}

\section{Counting $s^{-1}(\Av_{n,k}(231,312,321))$}\label{Sec:Descents}

A \emph{composition of $b$ into $a$ parts} is an $a$-tuple of positive integers that sum to $b$. Let $\Comp_a(b)$ denote the set of all compositions of $b$ into $a$ parts. There is a natural partial order $\preceq$ on $\Comp_a(b)$ defined by declaring that $(x_1,\ldots,x_a)\preceq(y_1,\ldots,y_a)$ if $\sum_{i=1}^\ell x_i\leq\sum_{i=1}^\ell y_i$ for all $\ell\in\{1,\ldots,a\}$. For $x=(x_1,\ldots,x_a)\in\Comp_a(b)$, let \[D_x=|\{y\in\Comp_a(b):y\preceq x\}|.\] Let $\psi(x)$ be the partition (composition with nonincreasing parts) that has $x_i-1$ parts of size $a-i$ for all $i\in\{1,\ldots,a-1\}$ (and has no parts of size at least $a$). The quantity $D_x$ can also be interpreted as the number of partitions whose Young diagrams fit inside the Young diagram of $\psi(x)$. Said differently, $D_x$ is the size of the order ideal in Young's lattice generated by $\psi(x)$ (see \cite{DefantClass} for more details). Let us write $C_x=\prod_{t=1}^a C_{x_t}$, where $C_j$ is the $j^\text{th}$ Catalan number. Put together, Theorems 6.1 and 6.2 in \cite{DefantClass} state that 
\begin{equation}\label{Eq15}
|s^{-1}(\Av_n(231,312,321))|=\sum_{k=0}^{n-1}\frac{1}{n+1}{n-k-1\choose k}{2n-2k\choose n}=\sum_{k=0}^{n-1}\sum_{q\,\in\,\Comp_{k+1}(n-k)}C_qD_q.
\end{equation}
From this equation, it is natural to conjecture that 
\begin{equation}\label{Eq16}
\sum_{q\,\in\,\Comp_{k+1}(n-k)}C_qD_q=\frac{1}{n+1}{n-k-1\choose k}{2n-2k\choose n}
\end{equation}
for all nonnegative integers $n$ and $k$; this is the content of Conjecture 6.1 in \cite{DefantClass}. In that article, the current author showed that this conjecture is equivalent to the following theorem. Let $\des(\pi)$ denote the number of descents of a permutation $\pi$. 
\begin{theorem}\label{Thm4}
For all $n\geq 1$ and $k\geq 0$, we have \[|s^{-1}(\Av_{n,k}(231,312,321))|=\frac{1}{n+1}{n-k-1\choose k}{2n-2k\choose n},\] where $\Av_{n,k}(231,312,321)=\{\pi\in\Av_n(231,312,321):\des(\pi)=k\}$. 
\end{theorem}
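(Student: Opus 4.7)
The plan is to parallel Sections~\ref{Sec:Unbalanced} and~\ref{Sec:Boolean}, now tracking the tail length and the descent count simultaneously. For nonnegative integers $\ell,n,k$, set
\[\mathcal{D}_\ell(n,k)=\{\pi\in\Av_{n+\ell}(231,312,321):\tl(\pi)=\ell,\ \des(\pi)=k\}\]
and define $\mathcal{D}_{\geq\ell}(n,k)$ analogously with $\tl(\pi)\geq\ell$. Let $B_\ell(n,k)$ and $B_{\geq\ell}(n,k)$ denote the corresponding fertilities, so the quantity of interest is $B_{\geq 0}(n,k)$. Since tail entries exceed all others, any occurrence of $231$, $312$, or $321$ lies entirely in the non-tail prefix, so pattern avoidance is governed by that prefix. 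A short case analysis (examining a triple $(i,i+1,j)$ where the maximum sits at position $i$) shows that in every $\rho\in\Av_m(231,312,321)$ the entry $m$ occupies position $m-1$ or $m$; this is the structural input specific to this class.

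For $\pi\in\mathcal{D}_\ell(n+1,k)$ with $\ell\geq 1$ and $n\geq 1$, the structural input forces $\pi_n=n+1$ (the case $\pi_{n+1}=n+1$ is excluded by the tail condition), and considering the triple $(\pi_a,\pi_n,\pi_{n+1})$ for any $a<n$ with $\pi_a=n$ shows that $231$-avoidance forces $\pi_{n+1}=n$. Hence
\[\pi=\pi_1\cdots\pi_{n-1}\,(n+1)\,n\,(n+2)\cdots(n+1+\ell),\]
with $\pi_1\cdots\pi_{n-1}$ ranging freely over $\Av_{n-1,k-1}(231,312,321)$, and the descent $d=n$ is tail-bound (every point strictly above $(n,n+1)$ lies in the tail). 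Applying the Decomposition Lemma at $d=n$, hooks $H\in\SW_n(\pi)$ are indexed by $j\in\{1,\ldots,\ell\}$ (the northeast endpoint is $(n+1+j,n+1+j)$); the sheltered subpermutation normalizes to $12\cdots j$, of fertility $C_j$ by Theorem~\ref{Thm1}, while the unsheltered subpermutation normalizes bijectively onto $\mathcal{D}_{\geq\ell-j+1}(n-1,k-1)$. This gives the recurrence
\[B_\ell(n+1,k)=\sum_{j=1}^\ell C_j\,B_{\geq\ell-j+1}(n-1,k-1)\qquad(n\geq 1),\]
supplemented by $B_{\geq\ell}(n,k)=B_\ell(n,k)+B_{\geq\ell+1}(n-1,k)$ and $B_{\geq\ell}(0,0)=C_\ell$.

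Setting $G_\ell(x,t)=\sum B_{\geq\ell}(n,k)x^nt^k$ and $I(x,y,t)=\sum_\ell G_\ell(x,t)y^\ell$, the recurrence compiles to a single kernel equation linear in $I$ and $G_0$. The kernel vanishes at the unique power series $Y=Y(x,t)$ with $Y=x+O(x^2)$ satisfying $Y-x=x^2t(C(Y)-1)$, so substituting $y=Y$ yields $G_0(x,t)=C(Y)$. Using the Catalan relation $YC(Y)^2=C(Y)-1$ to eliminate $Y$ then produces the cubic
\[x^2t\,G_0^3+x(1-xt)\,G_0^2-G_0+1=0,\]
or equivalently, with $w=G_0-1$, the compact form $w=x(1+w)^2(1+xtw)$.

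The remaining step, which I expect to be the main technical hurdle, is to identify $G_0$ with the explicit formula. I would verify that $F(x,t)=\sum_{n,k}\frac{1}{n+1}\binom{n-k-1}{k}\binom{2n-2k}{n}x^nt^k$ satisfies the same cubic; since the cubic has a unique formal power series solution with constant term $1$, this forces $G_0=F$. The verification reduces to a binomial identity: extracting $[x^Nt^K]$ from $x^2tF^3+x(1-xt)F^2-F+1=0$ becomes a ternary convolution among the $F_{n,k}$'s that can be collapsed via repeated use of the Vandermonde--Chu identity, or alternatively via a Lagrange--B\"urmann inversion applied to $w=x(1+w)^2(1+xtw)$ producing a closed-form coefficient expression that matches the claimed product of binomials.
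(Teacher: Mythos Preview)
Your proof is correct and follows essentially the same route as the paper: the same recurrence from the Decomposition Lemma at the tail-bound descent $d=n$, the same kernel substitution giving $G_0=C(Y)$ with $Y-x=x^2t(C(Y)-1)$, and the same algebraic elimination leading to an equation solved by Lagrange inversion. The paper's only streamlining is in the last step: rather than using $w=G_0-1$ (whose equation $w=x(1+w)^2(1+xtw)$ has $x$ appearing twice), it sets $\widehat J=xG_0$, obtains the equivalent relation $\widehat J=x+\widehat J^{\,2}/(1-t\widehat J^{\,2})$, and reads off $\frac{1}{n+1}\binom{n-k-1}{k}\binom{2n-2k}{n}$ directly from single-variable Lagrange inversion---so what you flag as the ``main technical hurdle'' is in fact routine.
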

In this section, we use the Decomposition Lemma to prove this theorem, thereby settling the aforementioned conjecture. Our result implies the first equality in \eqref{Eq15}, so the proof given in this section yields a completely new proof of Theorem 6.2 from \cite{DefantClass} (see that article for more details).   

\begin{proof}[Proof of Theorem \ref{Thm4}]
Let \[\mathcal D_\ell(n,k)=\{\pi\in\Av_{n+\ell,k}(231,312,321):\tl(\pi)=\ell\}\] and \[\mathcal D_{\geq\ell}(n,k)=\{\pi\in\Av_{n+\ell,k}(231,312,321):\tl(\pi)\geq\ell\}.\] Let $B_\ell(n,k)=|s^{-1}(\mathcal D_\ell(n,k))|$ and $B_{\geq\ell}(n,k)=|s^{-1}(\mathcal D_{\geq\ell}(n,k))|$.

Suppose $n\geq 0$ and $\pi\in\mathcal D_\ell(n+1,k)$ (see Figure \ref{Fig5}). Using the fact that $\pi$ avoids $312$ and $321$, we easily find that $\pi_n=n+1$. Because $\pi$ avoids $231$, we must have $\pi_{n+1}=n$. Note that $n$ is a tail-bound descent of $\pi$. The Decomposition Lemma tells us that $|s^{-1}(\pi)|$ is equal to the number of triples $(H,\mu,\lambda)$, where $H\in\SW_{n}(\pi)$, $\mu\in s^{-1}(\pi_U^H)$, and $\lambda\in s^{-1}(\pi_S^H)$. Choosing $H$ amounts to choosing the number $j\in\{1,\ldots,\ell\}$ such that the northeast endpoint of $H$ is $(n+1+j,n+1+j)$. Let us fix such a $j$. The permutation $\pi_S^H=n(n+2)(n+3)\cdots(n+j)$ is an increasing permutation of length $j$. Therefore, there are $C_j$ choices for $\lambda$ by Theorem \ref{Thm1}. 

\begin{figure}[h]
\begin{center}  {\includegraphics[width=0.464\textwidth]{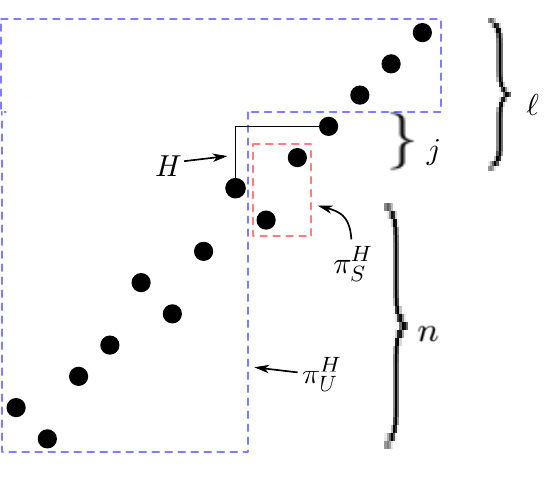}}
\end{center}
  \caption{An example of a permutation $\pi\in D_{5}(9,3)$ and a hook $H$ with $j=2$ (in the notation of the proof of Theorem \ref{Thm6}).}\label{Fig5}
\end{figure} 

Now, $\pi_U^H$ is a permutation of the set $\{1,\ldots,n-1\}\cup\{n+1\}\cup\{n+j+2,\ldots,n+\ell+1\}$, which has $n+\ell-j$ elements. Therefore, choosing $\pi_U^H$ is equivalent to choosing its standardization. The descents of $\pi_U^H$ are simply the descents of $\pi$ other than $n$, so $\des(\pi_U^H)=\des(\pi)-1=k-1$. It follows that the standardization of $\pi_U^H$ is in $\mathcal D_{\geq \ell-j+1}(n-1,k-1)$. Any permutation in $\mathcal D_{\geq \ell-j+1}(n-1,k-1)$ can be chosen as the standardization of $\pi_U^H$. Since $\pi_U^H$ has the same fertility as its standardization, the number of choices for $\pi_U^H$ and $\mu$ is $|s^{-1}(\mathcal D_{\geq \ell-j+1}(n-1,k-1))|=B_{\geq\ell-j+1}(n-1,k-1)$. We obtain the recurrence relation 
\begin{equation}\label{Eq18}
B_\ell(n+1,k)=\sum_{j=1}^\ell B_{\geq \ell-j+1}(n-1,k-1)C_j.
\end{equation}
Let \[G_{\ell,k}(x)=\sum_{n\geq 0}B_{\geq\ell}(n,k)x^n,\quad I_k(x,y)=\sum_{\ell\geq 0}G_{\ell,k}(x)y^\ell, \quad\text{and}\quad J(x,y,z)=\sum_{k\geq 0}I_k(x,y)z^k.\] Note that \[G_{\ell,k}(0)=B_{\geq \ell}(0,k)=|s^{-1}(\mathcal D_{\geq \ell}(0,k))|=\delta_{k0}C_\ell,\] where $\delta_{k0}$ is the Kronecker delta. Let $C(x)=\sum_{n\geq 0}C_nx^n=\dfrac{1-\sqrt{1-4x}}{2x}$. 
Because $B_{\geq 0}(n,k)=|s^{-1}(\Av_{n,k}(231,312,321))|$, our goal is to understand the generating function \[J(x,0,z)=\sum_{n\geq 0}\sum_{k\geq 0}B_{\geq 0}(n,k)x^nz^k=\sum_{n\geq 0}\sum_{k\geq 0}|s^{-1}(\Av_{n,k}(231,312,321))|x^nz^k.\]  

The recurrence \eqref{Eq18} is equivalent to \[\sum_{n\geq 0}B_\ell(n+1,k)x^n=x\sum_{j=1}^\ell G_{\ell-j+1,k-1}(x)C_j,\] so 
\begin{equation}\label{Eq19}
\sum_{\ell\geq 0}\sum_{n\geq 0}B_\ell(n+1,k)x^ny^\ell=x\sum_{\ell\geq 0}\sum_{j=1}^\ell G_{\ell-j+1,k-1}(x)C_jy^\ell=x\frac{C(y)-1}{y}(I_{k-1}(x,y)-I_{k-1}(x,0)).
\end{equation} 
On the other hand, \[\sum_{\ell\geq 0}\sum_{n\geq 0}B_\ell(n+1,k)x^ny^\ell=\sum_{\ell\geq 0}\sum_{n\geq 0}B_{\geq\ell}(n+1,k)x^ny^\ell-\sum_{\ell\geq 0}\sum_{n\geq 0}B_{\geq \ell+1}(n,k)x^ny^\ell\]
\begin{equation}\label{Eq20}
=\frac{1}{x}\sum_{\ell\geq 0}(G_{\ell,k}(x)-G_{\ell,k}(0))y^\ell-\frac{1}{y}\sum_{\ell\geq 0}G_{\ell+1,k}(x)y^{\ell+1}=\frac{I_k(x,y)-\delta_{k0}C(y)}{x}-\frac{I_k(x,y)-I_k(x,0)}{y}.
\end{equation} 
Combining \eqref{Eq19} and \eqref{Eq20} yields the equation 
\begin{equation}\label{Eq21}
x\frac{C(y)-1}{y}(I_{k-1}(x,y)-I_{k-1}(x,0))=\frac{I_k(x,y)-\delta_{k0}C(y)}{x}-\frac{I_k(x,y)-I_k(x,0)}{y}.
\end{equation} 
If we multiply both sides of \eqref{Eq21} by $z^k$ and sum over $k\geq 0$, we obtain \[xz\frac{C(y)-1}{y}(J(x,y,z)-J(x,0,z))=\frac{J(x,y,z)-C(y)}{x}-\frac{J(x,y,z)-J(x,0,z)}{y}.\] We can rewrite this equation as 
\begin{equation}\label{Eq22}
\frac{1}{y}(J(x,y,z)-J(x,0,z))(x^2z(C(y)-1)+x-y)=J(x,0,z)-C(y).
\end{equation}

As in the previous two sections, we use the kernel method to find the generating function $J(x,0,z)$. There is a unique power series $Y=Y(x,z)$ such that $C(Y(x,z))=x+2x^2+O(x^3)$ and $x^2z(C(Y)-1)+x-Y=0$. Substituting this into \eqref{Eq22} shows that $J(x,0,z)=C(Y)$. Let $\widehat J(x,z)=xJ(x,0,z)=xC(Y)$. Using the Catalan functional equation $YC(Y)^2+1-C(Y)=0$, we obtain 
\begin{equation}\label{Eq23}
\widehat J(x,z)-\frac{\widehat J(x,z)^2}{1-z\widehat J(x,z)^2}=x\left(C(Y)-\frac{xC(Y)^2}{1-x^2zC(Y)^2}\right)=x\left(C(Y)-\frac{x\frac{C(Y)-1}{Y}}{1-x^2z\frac{C(Y)-1}{Y}}\right).
\end{equation} We now use the fact that $x^2z(C(Y)-1)+x-Y=0$ to see that \[C(Y)-\frac{x\frac{C(Y)-1}{Y}}{1-x^2z\frac{C(Y)-1}{Y}}=1+\frac{Y-x}{x^2z}-\frac{x\frac{Y-x}{x^2zY}}{1-x^2z\frac{Y-x}{x^2zY}}=1.\] Substituting this into \eqref{Eq23} shows that \[\widehat J(x,z)=x+\frac{\widehat J(x,z)^2}{1-z\widehat J(x,z)^2}.\] By Lagrange inversion, \[\widehat J(x,z)=
x+\sum_{m\geq 0}\frac{1}{(m+1)!}\frac{\partial^m}{\partial x^m}\left(\frac{x^2}{1-zx^2}\right)^{m+1}\] \[=
x+\sum_{m\geq 0}\frac{1}{(m+1)!}\frac{\partial^m}{\partial x^m}\sum_{k\geq 0}{k+m\choose k}x^{2m+2k+2}z^k\] \[=x+\sum_{m\geq 0}\frac{1}{m+1}\sum_{k\geq 0}{k+m\choose k}{2m+2k+2\choose m+2k+2}x^{m+2k+2}z^k\] \[=x+x\sum_{k\geq 0}\sum_{n\geq 2k+1}\frac{1}{n+1}{n-k-1\choose k}{2n-2k\choose n}x^nz^k,\] as desired.  
\end{proof}

\section{Further Directions}

The proofs of our three main theorems relied on the Decomposition Lemma, which was proven in \cite{DefantCounting}. That article actually obtained this useful tool as a corollary of a more general result, called the ``Refined Decomposition Lemma," which allows one to count stack-sorting preimages of permutations according to the additional statistics $\des$ and $\peak$ (for $\pi=\pi_1\cdots\pi_n$, $\peak(\pi)$ is the number of indices $i\in\{2,\ldots,n-1\}$ such that $\pi_{i-1}<\pi_i>\pi_{i+1}$). It could be interesting to generalize Theorems \ref{Thm5}, \ref{Thm6}, and \ref{Thm4} by counting the relevant preimage sets according to the statistics $\des$ and $\peak$. One could also attempt to generalize these results to the context of ``troupes,'' which are special families of colored binary plane trees for which a version of the Decomposition Lemma holds (see \cite{DefantTroupes} for more details).  Of course, one could also search for other interesting sets of permutations whose stack-sorting preimages can be enumerated via the Decomposition Lemma.  

The results in Sections \ref{Sec:Unbalanced} and \ref{Sec:Boolean} complete the enumerations of all sets of the form \linebreak $s^{-1}(\Av(\tau^{(1)},\ldots,\tau^{(r)}))$ for $\{\tau^{(1)},\ldots,\tau^{(r)}\}\subseteq S_3$ except the set $\{321\}$. There are currently no nontrivial results that enumerate sets of this form when one or more of the patterns $\tau^{(i)}$ has length at least $4$. Obtaining such results would be a natural next step. It would be especially interesting to solve this enumerative problem using valid hook configurations (as in \cite{DefantClass}) or the Decomposition Lemma in the cases in which the sets $s^{-1}(\Av(\tau^{(1)},\ldots,\tau^{(r)}))$ have some natural alternative descriptions (i.e., if they are characterized as the permutations avoiding some collection of patterns or vincular patterns). It is shown in \cite{DefantFertilityWilf} that \[\sum_{n\geq 1}|s^{-1}(\Av_n(132,3412))|x^n=\sum_{n\geq 1}|s^{-1}(\Av_n(231,1423))|x^n=\sum_{n\geq 1}|s^{-1}(\Av_n(312,1342))|x^n.\] Thus, it is natural to consider the specific problem of computing the generating function \[\sum_{n\geq 1}|s^{-1}(\Av_n(132,3412))|x^n.\]

\section{Acknowledgments}
The author was supported by a Fannie and John Hertz Foundation Fellowship and an NSF Graduate Research Fellowship.

\end{document}